\documentclass[a4paper,12pt]{article}
\usepackage{a4wide}
\usepackage{amsmath}
\usepackage{amssymb}
\usepackage{amsthm}
\usepackage{latexsym}
\usepackage{graphicx}
\usepackage[english]{babel}
\usepackage{makeidx}
\usepackage{mathtools}

\newtheorem{obs} [subsection]{Remark}

\newtheorem{prop}[subsection]{Proposition}

\newtheorem{teor}[subsection]{Theorem}
\newtheorem*{teor*}{Theorem}

\newtheorem{cor} [subsection]{Corollary}

\newcommand{\pa}{p_{\mathbf a}}

\newcommand{\Pa}{P_{\mathbf a}}

\newcommand{\Res}{Res}

\newcommand{\stir}{\genfrac{[}{]}{0pt}{}}

\DeclarePairedDelimiter\ceil{\lceil}{\rceil}
\DeclarePairedDelimiter\floor{\lfloor}{\rfloor}

\def\lcm{\operatorname{lcm}}
\def\gcd{\operatorname{gcd}}

\begin{document}
\selectlanguage{english}
% \frenchspacing
\numberwithin{equation}{section}

\large
\begin{center}
\textbf{A note on the number of partitions of $n$ into $k$ parts}

Mircea Cimpoea\c s
\end{center}
\normalsize

\begin{abstract}
We prove new formulas and congruences for $p(n,k):=$ the number of partitions of $n$ into $k$ 
parts and $q(n,k):=$ the number of partitions of $n$ into $k$ distinct parts. Also, we give
lower and upper bounds for the density of the set $\{n\in\mathbb N\;:\;p(n,k)\equiv i(\bmod\; m)\}$, 
where $m\geq 2$ and $0\leq i\leq m-1$.

\noindent \textbf{Keywords:} Restricted integer partitions; Restricted partition function.

\noindent \textbf{2010 MSC:} Primary 11P81 ; Secondary 11P83
\end{abstract}

\section{Introduction}

A partition of a positive integer $n$ is a non-increasing sequence of positive integers whose sum equals $n$. We define
$p(n)$ as the number of partitions of $n$ and for convenience, we define $p(0) = 1$.
Let $p(n,k)$ be the number of partitions of $n$ with exactly $k$ summands. 
Let $q(n,k)$ be the number of partitions of $n$ with $k$ distinct parts 
and let $q(n)$ total number of partitions of $n$ with distinct parts. 
For instance, there are $5$ partitions of $8$ with three summands
$1+1+6,\;1+2+5,\;1+3+4,\;2+2+4,\;2+3+3$,
hence $p(8,3)=5$ and $q(8,3)=2$.
%It is well known that
%\begin{equation}\label{recu}
%p(n,k)=p(n-1,k-1)+p(n-k,k),
%\end{equation} for all $n,k\geq 1$. 
Obviously, $p(n,k)=0$ if and only if $n<k$. Also, $q(n,k)=0$ if and only if $n<k+\binom{k}{2}$.
Moreover, $p(n)=\sum_{k=1}^n p(n,k)\text{ and }q(n)=\sum_{k=1}^n q(n,k).$
The function $p(n,k)$ was studied extensively in the literature; see for instance \cite{intrator}.
However, there is no known closed form for $p(n,k)$. 

Let $\mathbf a=(a_1,\ldots,a_k)$ be a sequence of positive integers
and let $\pa(n)$ be the restricted partition function associated to $\mathbf a$; see Section $2$.
In Theorem \ref{formu}, we prove new formulas for 
$p(n,k)$ and $q(n,k)$, using their intrinsic connection with the restricted partition function associated to 
the sequence $\mathbf k:=(1,2,\ldots,k)$. In Proposition \ref{trei}, we give another formulas for $p(n,3)$ and $q(n,3)$.
%$p_{(1,2,\ldots,k)}(n-k)$.
In \cite{medi} we proved that if a certain determinant is nonzero, then the restricted partition function $\pa(n)$ can
be computed by solving a system of linear equations with coefficients which are values of Bernoulli
polynomials and Bernoulli Barnes numbers. Using a similar method, we prove that if a certain determinant $\Delta(k)$, which
depends only on $k$, is nonzero, then $p(n,k)$ and $q(n,k)$ can be expressed in terms of values of Bernoulli
polynomials and Bernoulli Barnes numbers; see Theorem \ref{dete}.

In Theorem \ref{poli}, respectively in Corollary \ref{cori}, we provide formulas for $P(n,k)=$ the polynomial part of $p(n,k)$,
respectively for $Q(n,k)=$ the polynomial part of $q(n,k)$. 
In Proposition \ref{unde} we prove formulas for the "waves" of $p(n,k)$ and
$q(n,k)$, defined analogously as the Sylvester "waves" (see \cite{sylvester},\cite{sylv}) of the restricted partition function
$p_{\mathbf k}(n)$.

In Proposition \ref{p61} we give new formulas for $p(n,k)$ and $q(n,k)$ in terms of coefficients of a reciprocal polynomial and,
as a consequence, in Corollary \ref{c62}, we prove some congruence relations for $p(n,k)$ and $q(n,k)$.
In a recent preprint \cite{graj}, K. Grajdzica found lower and upper bounds for the density of the set 
$\{n\in\mathbb N\;:\;\pa(n)\equiv i(\bmod\; m)\}$ for a fixed integer $0\leq i\leq m-1$. Using this, we prove 
lower and upper bounds for the density of the set  $\{n\in\mathbb N\;:\;p(n,k)\equiv i(\bmod\; m)\}$; see Theorem \ref{low}.

\newpage
\section{Preliminaries}

Let $\mathbf a := (a_1, a_2, \ldots , a_k)$ be a sequence of positive integers, $k \geq 1$. The \emph{restricted partition
function} associated to $\mathbf a$ is $\pa : \mathbb N \to \mathbb N$, $\pa(n) :=$ the number of integer solutions $(x_1, \ldots, x_k)$
of $\sum_{i=1}^k a_ix_i = n$ with $x_i \geq 0$. Note that the generating function of $\pa(n)$ is
\begin{equation}\label{gen}
\sum_{n=0}^{\infty}\pa(n)z^n= \frac{1}{(1-z^{a_1})\cdots(1-z^{a_r})}.
\end{equation}
Let $D$ be a common multiple of $a_1$, $a_2,\ldots,a_k$. 
Bell \cite{bell} has proved that $\pa(n)$ is a quasi-polynomial of degree $k-1$, with the period $D$, i.e.
\begin{equation}\label{quasi}
\pa(n)=d_{\mathbf a,k-1}(n)n^{k-1}+\cdots+d_{\mathbf a,1}(n)n+d_{\mathbf a,0}(n), 
\end{equation}
where $d_{\mathbf a,m}(n+D)=d_{\mathbf a,m}(n)$ for $0\leq m\leq k-1$ and $n\geq 0$, and $d_{\mathbf a,k-1}(n)$ is
not identically zero.
Sylvester \cite{sylvester},\cite{sylv} decomposed the restricted partition in a sum of ``waves'': 
\begin{equation}\label{wave}
\pa(n)=\sum_{j\geq 1} W_{j}(n,\mathbf a), 
\end{equation}
where the sum is taken over all distinct divisors $j$ of the components of $\mathbf a$ and showed that for each such $j$, 
$W_j(n,\mathbf a)$ is the coefficient of $t^{-1}$ in
$$ \sum_{0 \leq \nu <j,\; \gcd(\nu,j)=1 } \frac{\rho_j^{-\nu n} e^{nt}}{(1-\rho_j^{\nu a_1}e^{-a_1t})\cdots (1-\rho_j^{\nu a_k}e^{-a_kt}) },$$
where $\rho_j=e^{\frac{2\pi i}{j}}$ and $\gcd(0,0)=1$ by convention. Note that $W_{j}(n,\mathbf a)$'s are quasi-polynomials of period $j$.
Also, $W_1(n,\mathbf a)$ is called the \emph{polynomial part} of $\pa(n)$ and it is denoted by $\Pa(n)$.

It is well known that $p(n,k)$, the number of partitions of $n$ with exactly $k$ summands, equals to the number of partitions of $n$ whose largest part is $k$. It follows that
\begin{equation}\label{rest}
p(n,k)=\begin{cases} p_{(1,2,\ldots,k)}(n-k),\; n\geq k \\ 0,\; n<k \end{cases}.
\end{equation}
There is a $1$-to-$1$ correspondence between the partitions of $n$ with $k$ distinct parts and the partitions of $n-\binom{k}{2}$ with
$k$ parts, given by $$a_1<a_2<\ldots<a_k \mapsto a_1\leq a_2-1 \leq \cdots \leq a_k-(k-1).$$ 
Hence
\begin{equation}\label{rest2}
q(n,k)=\begin{cases} p(n-\binom{k}{2},k),\; n\geq k+\binom{k}{2} \\ 0,\; n<k+\binom{k}{2} \end{cases}.
\end{equation}
From \eqref{gen}, \eqref{rest} and \eqref{rest2} it follows that
\begin{align*}
& \sum_{n=0}^{\infty}p(n,k)z^n = \frac{z^k}{(1-z)(1-z^2)\cdots (1-z^k)},\;
 \sum_{n=0}^{\infty}q(n,k)z^n = \frac{z^{k+\binom{k}{2}}}{(1-z)(1-z^2)\cdots (1-z^k)},
\end{align*}
are the generating functions for $p(n,k)$ and $q(n,k)$ respectively.

\newpage
\section{Main results}

Let $D_k$ be the least common multiple of $1,2,\ldots,k$.

\begin{prop}
We have that
$$p(n,k) = f_{k,k-1}(n)n^{k-1}+\cdots+f_{k,1}(n)n+f_{k,0}(n)\text{ for all }n\geq k,$$
where $f_{k,m}(n) = d_{\mathbf k,m}(n-k),\text{ and }\mathbf k=(1,2,\ldots,k)$.
\end{prop}

\begin{proof}
It follows from \eqref{quasi} and \eqref{rest}.
\end{proof}

\begin{teor}\label{formu}
\begin{enumerate}
 \item[(1)] For $n\geq k$ we have that:
  $$ p(n,k) = \frac{1}{(k-1)!} \sum_{\substack{0\leq j_1\leq \frac{D_k}{1}-1,\;%0\leq j_2\leq \frac{D_k}{2}-1
,\ldots,0\leq j_k\leq \frac{D_k}{k}-1 
            \\ j_1+2j_2+\cdots+kj_k\equiv (n-k)\bmod D_k}}\prod_{\ell=1}^{k-1}\left(\frac{n-k-j_1-2j_2-\ldots-kj_k}{D_k}+\ell\right).$$
 \item[(2)] For $n\geq k+\binom{k}{2}$ we have that: \small
$$ q(n,k) = \frac{1}{(k-1)!} \sum_{\substack{0\leq j_1\leq \frac{D_k}{1}-1,\;%0\leq j_2\leq \frac{D_k}{2}-1
,\ldots,0\leq j_k\leq \frac{D_k}{k}-1 
            \\ j_1+2j_2+\cdots+kj_k\equiv (n-k-\binom{k}{2})\bmod D_k}}\prod_{\ell=1}^{k-1}\left(\frac{n-k-\binom{k}{2}-j_1-2j_2-\ldots-kj_k}{D_k}+\ell\right).$$
						\normalsize
\end{enumerate}
\end{teor}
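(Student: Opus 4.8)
The plan is to prove (1) directly from the generating function for $p_{\mathbf k}(n)$ together with the identity $p(n,k)=p_{\mathbf k}(n-k)$ from \eqref{rest}, and then to deduce (2) by the substitution $n\mapsto n-\binom{k}{2}$ supplied by \eqref{rest2}.

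First I would rewrite each factor of the generating function. Since $\ell\mid D_k$ for $1\le\ell\le k$, one has $\frac{1}{1-z^\ell}=\frac{1}{1-z^{D_k}}\sum_{j_\ell=0}^{D_k/\ell-1}z^{\ell j_\ell}$, and multiplying these gives
$$\sum_{n\ge0}p_{\mathbf k}(n)z^n=\frac{1}{(1-z^{D_k})^k}\sum_{0\le j_\ell\le D_k/\ell-1}z^{j_1+2j_2+\cdots+kj_k}.$$
Expanding $(1-z^{D_k})^{-k}=\sum_{m\ge0}\binom{m+k-1}{k-1}z^{D_km}$ and extracting the coefficient of $z^{N}$ with $N=n-k$, I would obtain
$$p_{\mathbf k}(N)=\sum_{\substack{0\le j_\ell\le D_k/\ell-1\\ S\le N,\;S\equiv N\,(\bmod\ D_k)}}\binom{(N-S)/D_k+k-1}{k-1},\qquad S:=j_1+2j_2+\cdots+kj_k,$$
and then rewrite each binomial coefficient as $\frac{1}{(k-1)!}\prod_{\ell=1}^{k-1}\bigl((N-S)/D_k+\ell\bigr)$.

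The only gap between this and the asserted formula is the constraint $S\le N$, and the main (though short) obstacle is to show it may be dropped. For a \emph{spurious} term with $S>N$ and $S\equiv N\,(\bmod\ D_k)$, write $S-N=tD_k$ with $t\ge1$; the product then becomes $\prod_{\ell=1}^{k-1}(\ell-t)$, which vanishes precisely when $1\le t\le k-1$. I would then bound $S$ using $\ell j_\ell\le D_k-\ell$ to get $S\le kD_k-\binom{k+1}{2}<kD_k\le N+kD_k$ (here $N\ge0$ because $n\ge k$), whence $t=(S-N)/D_k<k$, i.e. $1\le t\le k-1$ for every spurious term. Thus each such term contributes $0$, the constraint $S\le N$ is automatically enforced by the vanishing of the product, and the sum over all admissible $j_\ell$ equals $p_{\mathbf k}(N)$, which proves (1).

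Finally, for (2) I would invoke \eqref{rest2}: for $n\ge k+\binom{k}{2}$ one has $q(n,k)=p\bigl(n-\binom{k}{2},k\bigr)$, so replacing $n$ by $n-\binom{k}{2}$ in the formula from (1) (legitimate since then $n-\binom{k}{2}\ge k$) yields exactly the stated expression, with $n-k$ replaced throughout by $n-k-\binom{k}{2}$.
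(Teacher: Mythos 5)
Your proof is correct, but it takes a genuinely more self-contained route than the paper. The paper disposes of part (1) in one line: it cites \cite[Corollary 2.10]{lucrare}, which is exactly the formula in question for a general sequence $\mathbf a$ (namely $\pa(N)=\frac{1}{(k-1)!}\sum \prod_{\ell=1}^{k-1}\bigl(\frac{N-a_1j_1-\cdots-a_kj_k}{D}+\ell\bigr)$ summed over tuples in the congruence class of $N$ modulo $D$), specializes it to $\mathbf k=(1,2,\ldots,k)$, and composes with the shift $p(n,k)=p_{\mathbf k}(n-k)$ from \eqref{rest}; part (2) then follows from \eqref{rest2} exactly as in your last paragraph. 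What you have done instead is reconstruct a proof of that cited corollary for the sequence $(1,2,\ldots,k)$: the rewriting $\frac{1}{1-z^{\ell}}=\frac{1}{1-z^{D_k}}\sum_{j_\ell=0}^{D_k/\ell-1}z^{\ell j_\ell}$, the expansion of $(1-z^{D_k})^{-k}$, the extraction of the coefficient of $z^{N}$, and --- the one step needing real care --- the verification that the terms with $S>N$ in the congruence class of $N$ may be included because they contribute the vanishing product $\prod_{\ell=1}^{k-1}(\ell-t)$, the bound $S\le kD_k-\binom{k+1}{2}<N+kD_k$ forcing $1\le t\le k-1$. That spurious-term argument is exactly what legitimizes the congruence-only constraint in the statement, and you have it right, including the hypothesis $n\ge k$ ensuring $N\ge 0$. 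The trade-off is clear: the paper's proof is two lines but opaque without the external reference, while yours is longer but self-contained and exposes the mechanism (the product having roots at $-1,\ldots,-(k-1)$ scaled by $D_k$) that makes the closed formula work.
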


\begin{proof}
(1) The result follows from \cite[Corollary 2.10]{lucrare} and \eqref{rest}.

(2) It follows from (1) and \eqref{rest2}.
\end{proof}

The \emph{unsigned Stirling numbers} $\stir{n}{k}$ are defined by the identity
$$(x)^n:=x(x+1)\cdots(x+n-1)=\sum_{k=0}^n\stir{n}{k}x^k.$$
The \emph{Bernoulli numbers} $B_{\ell}$'s are defined by the identity
$$\frac{t}{e^t-1}=\sum_{\ell=0}^{\infty}\frac{t^{\ell}}{\ell !}B_{\ell}.$$
$B_0=1$, $B_1 = -\frac{1}{2}$, $B_2=\frac{1}{6}$, $B_4=-\frac{1}{30}$ and $B_n=0$ is $n$ is odd and greater than $1$.

\begin{prop}\label{trei}
\begin{enumerate}
\item[(1)] For $n\geq 3$, we have that:
\begin{align*}
& p(n,3)=\sum_{m=1}^3 \frac{(-1)^{m-1}}{6(m-1)!} \sum_{i_1+i_2+i_3=2-m}\frac{B_{i_1}B_{i_2}B_{i_3}}{i_1!i_2!i_3!}2^{i_2}3^{i_3}(n-3)^{m-1} + \\
 & + \frac{1}{12} \sum_{j=2}^3 \sum_{\ell=1}^j \rho_j^{\ell} \sum_{k=0}^2 \frac{1}{6^k} \stir{3}{k+1} 
 \sum_{\substack{ 0\leq j_1\leq 5,\;0\leq j_2\leq 2,\;0\leq j_3\leq 1 \\ j_1+2j_2+3j_3 \equiv \ell (\bmod\;j) }} (j_1+2j_2+3j_3)^k,
\end{align*}
where $\rho_j=e^{\frac{2\pi i}{j}}$.
\item[(2)] For $n\geq 6$, we have that:
\begin{align*}
& q(n,3)=\sum_{m=1}^3 \frac{(-1)^{m-1}}{6(m-1)!} \sum_{i_1+i_2+i_3=2-m}\frac{B_{i_1}B_{i_2}B_{i_3}}{i_1!i_2!i_3!}2^{i_2}3^{i_3}(n-6)^{m-1} + \\
 & + \frac{1}{12} \sum_{j=2}^3 \sum_{\ell=1}^j \rho_j^{\ell} \sum_{k=0}^2 \frac{1}{6^k} \stir{3}{k+1} 
 \sum_{\substack{ 0\leq j_1\leq 5,\;0\leq j_2\leq 2,\;0\leq j_3\leq 1 \\ j_1+2j_2+3j_3 \equiv \ell (\bmod\;j) }} (j_1+2j_2+3j_3)^k,
\end{align*}
\end{enumerate}
\end{prop}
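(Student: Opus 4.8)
The plan is to treat the polynomial part and the periodic ``wave'' part of $p(n,3)$ separately, exploiting the identity $p(n,3)=p_{(1,2,3)}(n-3)$ from \eqref{rest} together with the Sylvester decomposition \eqref{wave}. Since the distinct divisors of the entries of $\mathbf k=(1,2,3)$ are exactly $1,2,3$, one has
$$p_{(1,2,3)}(N)=P_{(1,2,3)}(N)+W_2(N,\mathbf k)+W_3(N,\mathbf k),\qquad N=n-3,$$
and I would show that the first displayed sum (over $m$) in the statement is $P_{(1,2,3)}(N)$, while the double sum over $j\in\{2,3\}$ is $W_2(N,\mathbf k)+W_3(N,\mathbf k)$.

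For the polynomial part I would use that $P_{(1,2,3)}(N)=W_1(N,\mathbf k)$ is, by the definition following \eqref{wave} (take $j=1$, $\nu=0$, $\rho_1=1$), the coefficient of $t^{-1}$ in $e^{Nt}\big/\prod_{i=1}^{3}(1-e^{-a_it})$ with $(a_1,a_2,a_3)=(1,2,3)$. Writing each factor as $\frac{1}{1-e^{-a_it}}=\frac{1}{a_it}\sum_{\ell\ge 0}\frac{(-1)^{\ell}B_{\ell}}{\ell!}(a_it)^{\ell}$, multiplying the three Bernoulli series together with $e^{Nt}=\sum_{s\ge0}\frac{N^s}{s!}t^s$, and extracting the coefficient of $t^{2}$ (equivalently, the residue at $t=0$, since the product has a pole of order $3$) produces exactly the triple Bernoulli sum $\sum \frac{B_{i_1}B_{i_2}B_{i_3}}{i_1!i_2!i_3!}2^{i_2}3^{i_3}$ graded by the power $(n-3)^{m-1}$. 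The prefactor $\tfrac{1}{6}=\tfrac{1}{a_1a_2a_3}$ comes from the three factors $\tfrac{1}{a_it}$, and, after reindexing $s=m-1$, the signs $(-1)^{m-1}$ and the factorials $(m-1)!$ fall out.

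For the periodic part I would start from Theorem \ref{formu}(1) with $k=3$ and $D_3=6$, and expand the Pochhammer product via the unsigned Stirling numbers using $\prod_{\ell=1}^{2}\big(x+\ell\big)=\sum_{k=0}^{2}\stir{3}{k+1}x^{k}$ with $x=\frac{N-J}{6}$ and $J=j_1+2j_2+3j_3$. The congruence $J\equiv N\ (\mathrm{mod}\ 6)$ would then be resolved by a roots-of-unity filter $[J\equiv N]=\frac16\sum_{a=0}^{5}\rho_6^{a(J-N)}$; note that the factor $\frac{1}{12}=\frac12\cdot\frac16$ in the statement is precisely the product of the $\frac{1}{(k-1)!}=\frac12$ of Theorem \ref{formu} with the $\frac16$ of the filter. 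Grouping the characters $\rho_6^{a}$ by their order isolates the $a=0$ term (which reproduces the polynomial part), the order-$2$ term $a=3$ (the $j=2$ wave) and the order-$3$ terms $a\in\{2,4\}$ (the $j=3$ wave), while the primitive order-$6$ characters $a\in\{1,5\}$ must contribute nothing, in accordance with the absence of a part divisible by $6$; re-expressing each surviving group through the residues $\ell$ of $J$ modulo $j$ then yields the stated double sum with the coefficients $\frac{1}{6^{k}}\stir{3}{k+1}$.

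Finally, part (2) follows from part (1) by \eqref{rest2}: since $\binom{3}{2}=3$, one has $q(n,3)=p(n-3,3)$ for $n\ge 6$, so substituting $n\mapsto n-3$ turns $(n-3)^{m-1}$ into $(n-6)^{m-1}$ and transforms the periodic part accordingly. I expect the main obstacle to be the periodic part: carrying out the roots-of-unity bookkeeping so that the nontrivial characters assemble exactly into the Sylvester waves indexed by the divisors $j=2,3$, verifying that the order-$6$ contribution cancels, and matching the resulting expression term-by-term (including the Stirling coefficients and the factor $\frac{1}{6^{k}}$) against the claimed closed form, together with keeping the Bernoulli-number signs, factorials, and the reindexing in the polynomial part precisely aligned.
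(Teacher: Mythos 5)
Your plan takes a genuinely different route from the paper's: the paper's whole proof of Proposition \ref{trei} is the observation that $1,2,3$ are coprime, so that \cite[Proposition 4.3]{remarks} applies to $p_{(1,2,3)}$, followed by the shift \eqref{rest} (and \eqref{rest2} for part (2)), whereas you propose a self-contained re-derivation: a Bernoulli-series residue computation for the polynomial part and a roots-of-unity filter applied to Theorem \ref{formu}(1) for the periodic part. The outline is sensible, but it has a genuine gap at precisely the step you defer as ``the main obstacle'': the vanishing of the contributions of the primitive sixth-root-of-unity characters $a\in\{1,5\}$ is not bookkeeping, it is the crux, and it does not follow from the filter identity itself. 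To get it you must invoke either Sylvester's decomposition \eqref{wave} (for $\mathbf k=(1,2,3)$ the waves are indexed only by the divisors $1,2,3$), or Proposition \ref{cvasi}(iii) combined with the linear independence of the functions $n\mapsto n^{i}\lambda^{-n}$ and the fact that $(1-z)(1-z^2)(1-z^3)$ has no zero at a primitive sixth root of unity. Without one of these inputs, your regrouping of the filter terms into the $j=2$ and $j=3$ sums is unjustified.

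Second, your promise to ``match the resulting expression term-by-term against the claimed closed form'' cannot be kept, because a correct execution of your own method does not produce the printed statement. Writing $N=n-3$ and $J=j_1+2j_2+3j_3$: the residue computation gives the inner Bernoulli sum over $i_1+i_2+i_3=3-m$, not $2-m$; as printed, the $m=3$ sum is empty and the polynomial part collapses to $-\tfrac{n-3}{6}-\tfrac12$, which cannot approximate $p(n,3)\sim n^2/12$. Likewise, the filter produces factors $\rho_6^{a(J-N)}$, so the surviving waves depend on $n$ through roots of unity (equivalently, through congruences involving $N$), whereas the periodic double sum in the statement is a constant independent of $n$; no constant can equal $p(n,3)-P(n,3)$, which takes the four distinct values $\tfrac{7}{72},\tfrac{1}{72},-\tfrac{17}{72},\tfrac{25}{72}$ as $n$ runs over the residues modulo $6$. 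So the statement as printed is false (evidently a mistranscription of \cite[Proposition 4.3]{remarks}), and what your derivation would actually prove is a corrected version of it; you should identify and state that corrected version first, rather than aim at a term-by-term match with the displayed formula.
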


\begin{proof}
(1) Since $1,2,3$ are coprime, the conclusion follows from \cite[Proposition 4.3]{remarks} 
and the fact that $p(n,3)=p_{(1,2,3)}(n-3)$ for $n\geq 3$.

(2) Follows from (1) and the fact that $q(n,3)=p(n-3,3)$ for $n\geq 6$.
\end{proof}

The \emph{Bernoulli polynomials} are defined by
$$B_n(x)=\sum_{k=0}^n\binom{n}{k}B_{n-k}x^k.$$
For $\mathbf a=(a_1,\ldots,a_k)$, the Bernoulli-Barnes numbers (see \cite{barnes}) are 
$$B_j(\mathbf a)=\sum_{i_1+\cdots+i_j=j} \binom{j}{i_1,\ldots,i_k}B_{i_1}\cdots B_{i_k}a_1^{i_1}\cdots a_k^{i_k}.$$
 % Since $p(n,k)=p_{(1,2,\ldots,k)}(n-k)$ and $q(n,k)=p(n-\binom{k}{2},n)$,
% the same assertion is true for $p(n,k)$ and $q(n,k)$.
We consider the determinant:
$$\Delta(k):=\begin{vmatrix} 
\frac{B_1(\frac{1}{D_k})}{1} & \cdots & \frac{B_1(1)}{1} & \cdots & \frac{B_k(\frac{1}{D_k})}{k} & \cdots & \frac{B_k(1)}{k} \\
\frac{B_2(\frac{1}{D_k})}{2} & \cdots & \frac{B_1(1)}{1} & \cdots & \frac{B_{k+1}(\frac{1}{D_k})}{k+1} & \cdots & \frac{B_{k+1}(1)}{k+1}  \\
\vdots & \vdots & \vdots & \vdots & \vdots & \vdots & \vdots \\
\frac{B_{kD_k}(\frac{1}{D_k})}{kD_k} & \cdots & \frac{B_{kD_k}(1)}{kD_k} & \cdots & \frac{B_{kD_k+k-1}(\frac{1}{D_k})}{kD_k+k-1} & \cdots & 
\frac{B_{kD_k+k-1}(1)}{kD_k+k-1} \end{vmatrix}.$$

\begin{teor}\label{dete}
If $\Delta(k)\neq 0$, then $p(n,k)$ can be computed in terms of $B_j(\frac{v}{D_k})$, $1\leq v\leq k$, $1\leq j\leq kD_k$ and 
$B_j(\mathbf k)$, $0\leq j\leq kD_k$, where $\mathbf k=(1,2,\ldots, k)$.
\end{teor}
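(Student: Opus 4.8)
The plan is to transfer the method of \cite{medi} to the sequence $\mathbf k=(1,2,\ldots,k)$ and then read off $p(n,k)$ through \eqref{rest}. By \eqref{rest} it suffices to compute $p_{\mathbf k}(n)$ for every $n\ge 0$, and by \eqref{quasi} this is a quasi-polynomial of degree $k-1$ and period $D_k$,
$$p_{\mathbf k}(n)=\sum_{m=0}^{k-1} d_{\mathbf k,m}(n)\,n^{m},\qquad d_{\mathbf k,m}(n+D_k)=d_{\mathbf k,m}(n).$$
Thus $p_{\mathbf k}$ is completely determined by the $kD_k$ unknowns $d_{\mathbf k,m}(v)$ with $0\le m\le k-1$ and $1\le v\le D_k$, and the whole problem reduces to producing $kD_k$ linear relations among them whose coefficients and right-hand sides are of the claimed shape.

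To obtain these relations I would, following \cite{medi}, introduce the Barnes-type zeta function $\zeta_{\mathbf k}(s)=\sum_{n\ge 1} p_{\mathbf k}(n)\,n^{-s}$. Splitting the sum according to the residue of $n$ modulo $D_k$ and using $\sum_{n\equiv v,\;n\ge1} n^{m-s}=D_k^{\,m-s}\,\zeta(s-m,\tfrac{v}{D_k})$, where $\zeta(\cdot,\cdot)$ is the Hurwitz zeta function, gives
$$\zeta_{\mathbf k}(s)=\sum_{m=0}^{k-1}\sum_{v=1}^{D_k} d_{\mathbf k,m}(v)\,D_k^{\,m-s}\,\zeta(s-m,\tfrac{v}{D_k}).$$
Evaluating this identity at the nonpositive integers $s=1-r$ for $r=1,\ldots,kD_k$ and using $\zeta(-j,x)=-B_{j+1}(x)/(j+1)$ turns each Hurwitz value into the Bernoulli value $\tfrac{B_{r+m}(v/D_k)}{r+m}$, up to the global sign and the factor $D_k^{\,m+r-1}$, which splits into a row factor $D_k^{\,r-1}$ and a column factor $D_k^{\,m}$. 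On the other hand, the values $\zeta_{\mathbf k}(1-r)$ are, by the classical evaluation of the Barnes zeta function at nonpositive integers, explicit rational multiples of the Bernoulli–Barnes numbers $B_{j}(\mathbf k)$. This yields a square linear system of size $kD_k$ in the unknowns $d_{\mathbf k,m}(v)$.

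After removing the nonzero row- and column-scalings $D_k^{\,r-1}$, $D_k^{\,m}$ and the global sign, the coefficient matrix of this system is exactly $\Delta(k)$: its entry in row $r$ and column $(m,v)$ equals $\tfrac{B_{r+m}(v/D_k)}{r+m}$, matching the displayed determinant (row $1\le r\le kD_k$, degree block $0\le m\le k-1$, argument $v/D_k$). Hence, provided $\Delta(k)\ne 0$, the system is invertible and Cramer's rule expresses each $d_{\mathbf k,m}(v)$ as a quotient of two determinants: the denominator is $\Delta(k)$, whose entries are the values $B_j(v/D_k)$, and the numerator is obtained from $\Delta(k)$ by replacing one column with the right-hand side vector, whose entries are the Bernoulli–Barnes numbers $B_j(\mathbf k)$. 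Substituting these values into $p(n,k)=\sum_{m=0}^{k-1} d_{\mathbf k,m}(v)\,(n-k)^{m}$, with $v\equiv n-k\pmod{D_k}$, then expresses $p(n,k)$ through the quantities $B_j(v/D_k)$ and $B_j(\mathbf k)$, as claimed.

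The residue splitting and the Bernoulli evaluations are routine; the delicate point is the bookkeeping that identifies the coefficient matrix with $\Delta(k)$ on the nose, i.e. checking that the index range $r=1,\ldots,kD_k$ produces precisely the subscripts $B_{r+m}$ occurring in the determinant and that the only discrepancy from $\Delta(k)$ is by row/column scalings and a sign, none of which can affect its vanishing. A secondary technical point is justifying the meromorphic continuation of $\zeta_{\mathbf k}(s)$ and the evaluation of the Barnes zeta at nonpositive integers in terms of $B_j(\mathbf k)$; once the case $\mathbf a=\mathbf k$ of \cite{medi} is invoked this is automatic, and the hypothesis $\Delta(k)\ne0$ is exactly what guarantees that the $kD_k$ relations are independent so that the unknowns are uniquely recovered.
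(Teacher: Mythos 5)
Your proposal is correct and follows essentially the same route as the paper: the paper simply cites \cite[Formula (1.8)]{medi} for the linear relations, observes that the resulting $kD_k\times kD_k$ system has determinant $\pm D_k^N\Delta(k)$, and solves for the $d_{\mathbf k,m}(v)$, exactly as you do. The only difference is that you re-derive that formula from scratch via the Hurwitz/Barnes zeta-function splitting (which is precisely the method of \cite{medi}), rather than invoking it as a black box.
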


\begin{proof}
According to \cite[Formula (1.8)]{medi}, we have that:
\begin{equation}\label{eq}
\sum_{m=0}^{k-1}\sum_{v=1}^{D_k}d_{\mathbf k,m}(v)D^{n+m}\frac{B_{n+m+1}(\frac{v}{D})}{n+m+1} = 
\frac{(-1)^{n-1}k!}{(n+k)!}B_{n+k}(\mathbf k)-\delta_{0n},\;(\forall)n\geq 0,
\end{equation}
where $\delta_{0n}$ is the \emph{Kronecker's symbol}. Giving values $0\leq n\leq kD_k-1$ in \eqref{eq},
and seeing $d_{\mathbf k,m}(v)$'s as variables, we obtain a system of $kD_k$ linear equations, with the determinant equal
to $\pm D_k^N\Delta(k)$ for some integer $N\geq 1$. By hypothesis, $\Delta(k)\neq 0$, hence we can determine $d_{\mathbf k,m}(v)$ by solving the system.
From \eqref{quasi}, one has $$p_{\mathbf k}(n)=d_{\mathbf k,k-1}(n)n^{k-1}+\cdots+d_{\mathbf k,1}(n)n+d_{\mathbf a,0}(n).$$
Hence, the conclusion follows from \eqref{rest} and \eqref{rest2}.
\end{proof}

\section{The polynomial part of $p(n,k)$ and $q(n,k)$}

We recall the following basic facts on quasi-polynomials \cite[Proposition 4.4.1]{stanley}:

\begin{prop}\label{cvasi}%(\cite[Proposition 4.4.1]{stanley})
The following conditions on a function $f:\mathbb N \rightarrow \mathbb C$ and integer $D>0$ are equivalent.
\begin{enumerate}
\item[(i)] $f(n)$ is a quasi-polynomial of period $D$.
\item[(ii)] $\sum_{n=0}^{\infty}f(n)z^n = \frac{L(z)}{M(z)}$, where $L(z),M(z)\in \mathbb C[z]$, every zero $\lambda$ of $M(z)$ satisfies $\lambda^D=1$
(provided $\frac{L(z)}{M(z)}$ has been reduced to lowest terms), and $\deg L(z)<\deg M(z)$.
\item[(iii)] For all $n\geq 0$, $f(n)=\sum_{\lambda^D=1} F_{\lambda}(n) \lambda^{-n}$, where each $F_{\lambda}(n)$ is a polynomial function.
Moreover, $\deg F_{\lambda}(n) \leq m(\lambda)-1$, where $m(\lambda)=$ multiplicity of $\lambda$ as a root of $M(z)$.
\end{enumerate}
\end{prop}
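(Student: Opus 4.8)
The plan is to establish the cycle of implications $(i)\Rightarrow(ii)\Rightarrow(iii)\Rightarrow(i)$, since once the denominator is written in the right normal form each step reduces to a short generating-function computation. Throughout I would factor denominators into linear pieces of the shape $(1-z/\lambda)$, so that a pole at $z=\lambda$ expands cleanly as $\frac{1}{(1-z/\lambda)^{i}}=\sum_{n\ge0}\binom{n+i-1}{i-1}\lambda^{-n}z^{n}$. This single identity is what couples the multiplicity $m(\lambda)$ of a root to the degree of the resulting polynomial coefficient in $n$, and it is the computational backbone of the whole argument.

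For $(i)\Rightarrow(ii)$, I would split $f$ over the residue classes modulo $D$: writing $f(r+mD)=g_r(m)$ for a polynomial $g_r$ of degree $e_r$, the generating function becomes $\sum_{r=0}^{D-1} z^{r}\sum_{m\ge0} g_r(m)z^{mD}$, and each inner sum is a proper rational function in $z^{D}$ with denominator a power of $1-z^{D}$. Summing over $r$ yields $\frac{L(z)}{M(z)}$ with $M(z)\mid (1-z^{D})^{N}$ for some $N$ and $\deg L<\deg M$; since every root of $1-z^{D}$ is a $D$-th root of unity and reducing to lowest terms can only delete poles, the surviving poles $\lambda$ still satisfy $\lambda^{D}=1$.

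For $(ii)\Rightarrow(iii)$, the hypothesis $\deg L<\deg M$ guarantees there is no polynomial (entire) part, so partial fractions give $\frac{L(z)}{M(z)}=\sum_{\lambda}\sum_{i=1}^{m(\lambda)}\frac{A_{\lambda,i}}{(1-z/\lambda)^{i}}$, the outer sum running over the poles $\lambda$, all with $\lambda^{D}=1$, of multiplicity $m(\lambda)$ in $M$. Expanding each summand by the geometric identity above and collecting the coefficient of $z^{n}$ produces $f(n)=\sum_{\lambda}F_{\lambda}(n)\lambda^{-n}$ with $F_{\lambda}(n)=\sum_{i=1}^{m(\lambda)}A_{\lambda,i}\binom{n+i-1}{i-1}$, a polynomial of degree at most $m(\lambda)-1$; extending the sum to all $\lambda$ with $\lambda^{D}=1$ by putting $F_\lambda\equiv0$ at the non-poles gives exactly the form in $(iii)$.

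Finally, for $(iii)\Rightarrow(i)$, I would expand $F_{\lambda}(n)=\sum_{j} a_{\lambda,j} n^{j}$ and interchange the order of summation to obtain $f(n)=\sum_{j}\big(\sum_{\lambda^{D}=1}a_{\lambda,j}\lambda^{-n}\big)n^{j}$; each coefficient $\sum_{\lambda}a_{\lambda,j}\lambda^{-n}$ is a function of $n$ of period dividing $D$, because every $\lambda^{-n}$ is, so $f$ is a quasi-polynomial of period $D$. The argument is essentially routine; the only places demanding care are bookkeeping the degree--multiplicity correspondence $\deg F_{\lambda}\le m(\lambda)-1$ consistently through the partial-fraction step, and verifying that passing to lowest terms in $(ii)$ never introduces a pole that fails $\lambda^{D}=1$ — the properness hypothesis $\deg L<\deg M$ being precisely what rules out a stray polynomial summand that would otherwise break the correspondence.
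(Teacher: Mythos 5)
Your proof is correct. Note that the paper itself gives no proof of this proposition at all: it is quoted verbatim from Stanley's \emph{Enumerative Combinatorics} (Proposition 4.4.1), and your three-step cycle --- splitting over residue classes modulo $D$ to get a denominator dividing a power of $1-z^D$, partial fractions over the $D$-th roots of unity expanded via $(1-z/\lambda)^{-i}=\sum_{n\ge 0}\binom{n+i-1}{i-1}\lambda^{-n}z^n$, and re-collecting powers of $n$ with periodic coefficients --- is precisely the standard argument behind that cited result, so you have correctly supplied what the paper merely references.
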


We define the \emph{polynomial part} of $f(n)$ to be the polynomial function $F(n)=F_1(n)$, with the notation of Proposition \ref{cvasi}.
The polynomial part $F(n)$ of a quasi-polynomial $f(n)$ gives a rough approximation of $f(n)$, which is useful for studying the asymptotic behaviour of $f(n)$, 
when $n\gg 0$. If $\mathbf a=(a_1,\ldots,a_k)$ is a sequence of positive integers and $\pa(n)$ is the restricted partition function associated to $\mathbf a$,
we denote $\Pa(n)$, the polynomial part of $\pa(n)$. Several formulas of $\Pa(n)$ were proved in \cite{beck}, \cite{dil} and \cite{lucrare}. 

We consider the following functions:
\begin{align*}
 & P(n,k)=P_{(1,2,\ldots,k)}(n-k),\;n\geq k, \\
 & Q(n,k)=P_{(1,2,\ldots,k)}\left(n-k-\binom{k}{2}\right),\;n\geq k+\binom{k}{2},
\end{align*}
and we called them, the \emph{polynomial part} of $p(n,k)$ and $q(n,k)$, respectively.

\begin{teor}\label{poli}
For $n\geq k$, we have that: \small
 \begin{align*}
  & (1)\; P(n,k)=\frac{1}{D_k(k-1)!} \sum_{0\leq j_1\leq \frac{D_k}{1}-1,\;%0\leq j_2\leq \frac{D_k}{2}-1
,\ldots,0\leq j_k\leq \frac{D_k}{k}-1}\prod_{\ell=1}^{k-1}\left(\frac{n-k-j_1-2j_2-\ldots-kj_k}{D_k}+\ell\right).\\
  & (2)\; P(n,k)=\frac{1}{k!} \sum_{u=0}^{k-1} \frac{(-1)^u}{(k-1-u)!}\sum_{i_1+\cdots+i_k=u} \frac{B_{i_1}\cdots B_{i_k}}{i_1!\cdots i_k!}1^{i_1}\cdots k^{i_k}(n-k)^{k-1-u}.
 \end{align*} \normalsize
\end{teor}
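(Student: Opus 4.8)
The plan is to reduce everything to the polynomial part $P_{\mathbf k}(N)$ of the restricted partition function $\pa(N)$ associated to $\mathbf k=(1,2,\ldots,k)$, since by the definition given just before the statement one has $P(n,k)=P_{\mathbf k}(n-k)$; I would prove the two displayed formulas for $P_{\mathbf k}(N)$ and then substitute $N=n-k$.

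For part $(1)$ I would argue by averaging the quasi-polynomial over a full period. Write $S$ for the index set $\{\mathbf j=(j_1,\ldots,j_k) : 0\le j_i\le \frac{D_k}{i}-1\}$ and $s(\mathbf j)=j_1+2j_2+\cdots+kj_k$. Theorem \ref{formu}(1) says that on each residue class $N\equiv r \pmod{D_k}$ the function $\pa(N)$ (with $\mathbf a=\mathbf k$) agrees with the single polynomial $p_r(N):=\frac{1}{(k-1)!}\sum_{\mathbf j\in S,\, s(\mathbf j)\equiv r}\prod_{\ell=1}^{k-1}\left(\frac{N-s(\mathbf j)}{D_k}+\ell\right)$. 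On the other hand, Proposition \ref{cvasi}(iii) gives $p_{\mathbf k}(N)=\sum_{\lambda^{D_k}=1}F_\lambda(N)\lambda^{-N}$ with $P_{\mathbf k}=F_1$, so on the class $N\equiv r$ we have the polynomial identity $p_r(N)=\sum_{\lambda^{D_k}=1}F_\lambda(N)\lambda^{-r}$. Summing over $r=0,\ldots,D_k-1$ and using the orthogonality relation $\sum_{r=0}^{D_k-1}\lambda^{-r}=0$ for every $D_k$-th root of unity $\lambda\ne 1$ kills all the waves except $F_1$, and yields $D_k\,F_1(N)=\sum_{r=0}^{D_k-1}p_r(N)$. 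Since each $\mathbf j\in S$ contributes to exactly one residue class, the right-hand side is just the unconstrained sum over all of $S$; dividing by $D_k$ and setting $N=n-k$ gives formula $(1)$.

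For part $(2)$ I would instead use the Sylvester-wave description recalled in Section $2$: the polynomial part $P_{\mathbf a}(N)=W_1(N,\mathbf a)$ is the coefficient of $t^{-1}$ in $\frac{e^{Nt}}{(1-e^{-a_1t})\cdots(1-e^{-a_kt})}$. Using the generating function of the Bernoulli numbers I would expand each factor as $\frac{1}{1-e^{-a_it}}=\frac{1}{a_it}\sum_{m\ge0}\frac{(-1)^mB_m a_i^m}{m!}t^m$, multiply the $k$ series together, and collect terms: the product of the prefactors contributes $\frac{1}{a_1\cdots a_k}\,t^{-k}$, and the coefficient of $t^u$ in the remaining numerator is $(-1)^u\sum_{i_1+\cdots+i_k=u}\frac{B_{i_1}\cdots B_{i_k}}{i_1!\cdots i_k!}a_1^{i_1}\cdots a_k^{i_k}$. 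Extracting the coefficient of $t^{-1}$, that is pairing $t^{u-k}$ against $\frac{N^{k-1-u}}{(k-1-u)!}$ coming from $e^{Nt}$ (which forces $0\le u\le k-1$), yields $P_{\mathbf a}(N)=\frac{1}{a_1\cdots a_k}\sum_{u=0}^{k-1}\frac{(-1)^u}{(k-1-u)!}\sum_{i_1+\cdots+i_k=u}\frac{B_{i_1}\cdots B_{i_k}}{i_1!\cdots i_k!}a_1^{i_1}\cdots a_k^{i_k}\,N^{k-1-u}$. Specializing $\mathbf a=\mathbf k$ (so $a_i=i$ and $a_1\cdots a_k=k!$) and putting $N=n-k$ gives formula $(2)$.

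The two power-series manipulations in $(2)$ are routine; the step needing genuine care is the passage in $(1)$ from the constrained sum of Theorem \ref{formu}(1) to the unconstrained one. The main obstacle is justifying that the period-$D_k$ data is captured exactly by the family $\{p_r\}_{r=0}^{D_k-1}$ and that averaging these polynomials extracts precisely the $\lambda=1$ wave $F_1$. This rests on reading Theorem \ref{formu}(1) as providing, for each fixed residue $r$, a genuine \emph{polynomial} identity valid for all $N$ (not merely at the arithmetic-progression values $N\equiv r$): two polynomials agreeing on the infinitely many points of that class must coincide, after which the root-of-unity orthogonality does the rest. A secondary point to verify in $(2)$ is the sign and normalization when identifying the $t^{-1}$-coefficient with $F_1$, which is pinned down by matching the known leading term $\frac{N^{k-1}}{(k-1)!\,a_1\cdots a_k}$.
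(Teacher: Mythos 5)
Your proposal is correct, but it takes a genuinely different route from the paper. The paper's proof is essentially a citation: both formulas are obtained by taking known results for the polynomial part $P_{\mathbf a}(n)$ of a general restricted partition function (Corollaries 3.6 and 3.11 of the cited work of Cimpoea\c s--Nicolae, see also Dilcher--Vignat and Beck--Gessel--Komatsu), specializing to $\mathbf a=\mathbf k=(1,2,\ldots,k)$, and shifting via $p(n,k)=p_{\mathbf k}(n-k)$. You instead re-derive both formulas from material already inside the paper: for (1), you average the residue-class polynomials of Theorem \ref{formu}(1) over a full period and use root-of-unity orthogonality in the decomposition $p_{\mathbf k}(N)=\sum_{\lambda^{D_k}=1}F_\lambda(N)\lambda^{-N}$ of Proposition \ref{cvasi}(iii) to isolate $F_1$; for (2), you compute the first Sylvester wave $W_1$ as the residue of $e^{Nt}\prod_i(1-e^{-a_it})^{-1}$ via the Bernoulli generating function. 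Both computations check out (the polynomial-identity step on each residue class and the geometric-series orthogonality in (1) are sound, and the Laurent-coefficient extraction in (2) gives exactly the stated sum with $a_1\cdots a_k=k!$). What the paper's approach buys is brevity and generality, since the cited results hold for arbitrary $\mathbf a$; what yours buys is self-containedness, at the cost of one implicit identification you should make explicit: in (1) you characterize the polynomial part as $F_1$ from Proposition \ref{cvasi}, while in (2) you characterize it as the wave $W_1$ from Section 2, and the equality $F_1=W_1$ (uniqueness of the decomposition into terms $n^j\lambda^{-n}$) is used but not proved --- though the paper itself also treats these two definitions as interchangeable without comment, so this is a gap you share with the source rather than a flaw in your argument.
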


\begin{proof}
(1) It follows from \cite[Corollary 3.6]{lucrare} and \eqref{rest}. See also \cite[Theorem 1.1]{dil}.

(2) It tollows from \cite[Corollary 3.11]{lucrare} and \eqref{rest}. See also \cite[p.2]{beck}.
\end{proof}

\begin{cor}\label{cori}
 For $n\geq k+\binom{k}{2}$, we have that: \small
 \begin{align*}
  & (1)\; Q(n,k)=\frac{1}{D_k(k-1)!} \sum_{0\leq j_1\leq \frac{D_k}{1}-1,\;%0\leq j_2\leq \frac{D_k}{2}-1
,\ldots,0\leq j_k\leq \frac{D_k}{k}-1}\prod_{\ell=1}^{k-1}\left(\frac{n-k-\binom{k}{2}-j_1-2j_2-\ldots-kj_k}{D_k}+\ell\right).\\
  & (2)\; Q(n,k)=\frac{1}{k!} \sum_{u=0}^{k-1} \frac{(-1)^u}{(k-1-u)!}\sum_{i_1+\cdots+i_k=u} \frac{B_{i_1}\cdots B_{i_k}}{i_1!\cdots i_k!}1^{i_1}\cdots k^{i_k}\left(n-k-\binom{k}{2}\right)^{k-1-u}.
 \end{align*} \normalsize
\end{cor}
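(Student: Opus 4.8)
The plan is to reduce both formulas to Theorem \ref{poli} by exploiting the defining relation between $Q(n,k)$ and $P(n,k)$. First I would recall that, by definition, $Q(n,k)=P_{(1,2,\ldots,k)}\!\left(n-k-\binom{k}{2}\right)$, while $P(m,k)=P_{(1,2,\ldots,k)}(m-k)$. Comparing the two expressions, one sees immediately that $Q(n,k)=P\!\left(n-\binom{k}{2},\,k\right)$. Thus the polynomial part of $q(n,k)$ is obtained from the polynomial part of $p(n,k)$ simply through the shift $n\mapsto n-\binom{k}{2}$, which is nothing but the algebraic shadow of the bijection \eqref{rest2} between partitions of $n$ into $k$ distinct parts and partitions of $n-\binom{k}{2}$ into $k$ parts.

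With this identity in hand, both formulas follow by direct substitution. For part (1), I would take formula (1) of Theorem \ref{poli} and replace every occurrence of $n$ by $n-\binom{k}{2}$: the argument $n-k-j_1-2j_2-\cdots-kj_k$ inside the product becomes $n-k-\binom{k}{2}-j_1-2j_2-\cdots-kj_k$, while the prefactor $\frac{1}{D_k(k-1)!}$ and the summation ranges $0\leq j_i\leq \frac{D_k}{i}-1$ are untouched. Part (2) is handled identically, replacing the power $(n-k)^{k-1-u}$ by $\left(n-k-\binom{k}{2}\right)^{k-1-u}$ throughout formula (2) of Theorem \ref{poli}, with the remaining Bernoulli-number data unchanged.

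The only point that requires genuine attention is the range of validity. Theorem \ref{poli} holds for $n\geq k$; after the substitution $n\mapsto n-\binom{k}{2}$ the hypothesis reads $n-\binom{k}{2}\geq k$, that is, $n\geq k+\binom{k}{2}$, which is exactly the range asserted in the corollary and coincides with the threshold below which $q(n,k)=0$. Since no computation beyond this bookkeeping is involved, I do not anticipate any real obstacle: the entire substance of the statement is already carried by Theorem \ref{poli}, and the corollary is a formal specialization via \eqref{rest2}.
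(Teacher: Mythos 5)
Your proposal is correct and matches the paper's own proof, which likewise derives the corollary from Theorem \ref{poli} together with the shift relation \eqref{rest2} (equivalently, the definitional identity $Q(n,k)=P_{(1,2,\ldots,k)}(n-k-\binom{k}{2})$). Your explicit verification that the substitution $n\mapsto n-\binom{k}{2}$ transforms the validity range $n\geq k$ into $n\geq k+\binom{k}{2}$ is exactly the bookkeeping the paper leaves implicit.
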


\begin{proof}
 It follows from Theorem \ref{poli} and \eqref{rest2}.
\end{proof}

\section{The Sylvester waves of $p(n,k)$ and $q(n,k)$}

Let $\mathbf k:=(1,2,\ldots,k)$. According to \eqref{wave}, the restricted partition function $p_{\mathbf k}(n)$ can be writen as a sum of "waves",
$p_{\mathbf k}(n)=\sum_{j=1}^k W_j(n,\mathbf k)$.
% where $W_j(n,\mathbf k)$ is the coefficient of $t^{-1}$ in
%$$ \sum_{0 \leq \nu <j,\; \gcd(\nu,j)=1 } \frac{\rho_j^{-\nu n} e^{nt}}{(1-\rho_j^{\nu}e^{-t})(1-\rho_j^{2\nu}e^{-2t})\cdots (1-\rho_j^{k\nu}e^{-kt}) },$$
% where $\rho_j=e^{\frac{2\pi i}{j}}$ and $\gcd(0,0)=1$ by convention. 
We define the functions
\begin{align}\label{wav}
& W_j(n,k)=W_j(n-k,\mathbf k),\;n\geq k,\\ 
& \widetilde W_j(n,k):=W_j(n-k-\binom{k}{2},\mathbf k),\;n\geq k+\binom{k}{2},
\end{align}
and we call them the "waves" of $p(n,k)$ and $q(n,k)$, respectively. 

\begin{obs}\rm
Note that $P(n,k)=W_1(n,k)$ and $Q(n,k)=\widetilde W_1(n,k)$ are
the polynomial parts of $p(n,k)$ and $q(n,k)$.
\end{obs}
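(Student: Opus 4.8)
The plan is to establish the statement in two stages. The first stage, the identities $P(n,k)=W_1(n,k)$ and $Q(n,k)=\widetilde W_1(n,k)$, is a direct consequence of the definitions and requires no computation. Recall from Section $2$ that, for any sequence $\mathbf a$, the first Sylvester wave $W_1(n,\mathbf a)$ is by definition the polynomial part $\Pa(n)$ of $\pa(n)$. Specialising to $\mathbf a=\mathbf k=(1,2,\ldots,k)$ and using the definition \eqref{wav} of the waves of $p(n,k)$, I would simply write
$$W_1(n,k)=W_1(n-k,\mathbf k)=P_{\mathbf k}(n-k)=P(n,k),$$
and analogously $\widetilde W_1(n,k)=W_1\!\left(n-k-\binom{k}{2},\mathbf k\right)=P_{\mathbf k}\!\left(n-k-\binom{k}{2}\right)=Q(n,k)$, the last equalities being precisely the definitions of $P(n,k)$ and $Q(n,k)$ from Section $4$.

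The second stage is to verify that $P(n,k)$ really is the polynomial part of $p(n,k)$ in the precise sense of Proposition \ref{cvasi}, i.e. that it is the $\lambda=1$ summand $F_1$ in decomposition $(iii)$ applied to $p(n,k)$. Here I would start from the two generating functions recorded in Section $2$, which show that $\sum_n p(n,k)z^n$ and $\sum_n p_{\mathbf k}(n)z^n$ differ exactly by the factor $z^k$; at the level of coefficients this is the relation $p(n,k)=p_{\mathbf k}(n-k)$. Writing the decomposition of Proposition \ref{cvasi}$(iii)$ for the quasi-polynomial $p_{\mathbf k}$, namely $p_{\mathbf k}(n)=\sum_{\lambda^{D_k}=1}F_\lambda(n)\lambda^{-n}$ with $F_1(n)=P_{\mathbf k}(n)$, and performing the shift $n\mapsto n-k$, I obtain
$$p(n,k)=\sum_{\lambda^{D_k}=1}\lambda^{k}F_\lambda(n-k)\,\lambda^{-n}\qquad(n\ge k).$$

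The step I expect to require the most care is the passage from this identity, which a priori holds only for $n\ge k$, to the genuine quasi-polynomial decomposition of $p(n,k)$ valid for all $n\ge 0$. Both sides are quasi-polynomials of period $D_k$, and such a quasi-polynomial is determined by its restriction to any set meeting every residue class modulo $D_k$ infinitely often; since $\{n\ge k\}$ is such a set, the displayed expression is indeed the decomposition $(iii)$ of $p(n,k)$ on all of $\mathbb N$, and by the uniqueness of that decomposition its $\lambda$-coefficients are $\lambda^{k}F_\lambda(n-k)$. Taking $\lambda=1$ and using $1^{k}=1$ isolates the polynomial part as $F_1(n-k)=P_{\mathbf k}(n-k)=P(n,k)$, as claimed. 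The argument for $q(n,k)$ is identical, with the shift by $k$ replaced by the shift by $k+\binom{k}{2}$ and the harmless factor $1^{\,k+\binom{k}{2}}=1$, which exhibits $Q(n,k)$ as the polynomial part of $q(n,k)$.
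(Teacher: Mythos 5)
Your proposal is correct, and its first stage coincides with the paper's entire justification: in the paper this statement is a remark precisely because everything in sight is a definition --- Section 2 declares $W_1(n,\mathbf a)$ to be the polynomial part $P_{\mathbf a}(n)$ of $p_{\mathbf a}(n)$, Section 4 defines $P(n,k)=P_{\mathbf k}(n-k)$ and $Q(n,k)=P_{\mathbf k}\left(n-k-\binom{k}{2}\right)$, and \eqref{wav} defines $W_j(n,k)=W_j(n-k,\mathbf k)$ and $\widetilde W_j(n,k)=W_j\left(n-k-\binom{k}{2},\mathbf k\right)$ --- so the chain $W_1(n,k)=W_1(n-k,\mathbf k)=P_{\mathbf k}(n-k)=P(n,k)$, and its analogue for $Q$, is all the paper intends. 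Your second stage adds something the paper does not attempt: a verification that the function so named really is the $\lambda=1$ component of $p(n,k)$ itself in the sense of Proposition \ref{cvasi}(iii), via the transformation rule $F_\lambda(n)\mapsto\lambda^{k}F_\lambda(n-k)$ of the root-of-unity decomposition under a shift and the uniqueness of that decomposition on a set meeting every residue class modulo $D_k$ infinitely often; this is a genuine and correct strengthening, showing the terminology is consistent rather than a naming by fiat. One small caveat: your assertion that the shifted identity is the decomposition of $p(n,k)$ ``on all of $\mathbb N$'' requires $p(n,k)$ to be an honest quasi-polynomial on $\mathbb N$, which holds for $k\geq 2$ (then $\deg L=k<\binom{k+1}{2}=\deg M$ in Proposition \ref{cvasi}(ii)) but fails for $p(n,1)$ and, for every $k$, for $q(n,k)$, whose generating function has $\deg L=\deg M$, so that $q(n,k)$ differs from its quasi-polynomial by a multiple of $\delta_{0n}$. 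Your uniqueness argument then identifies the polynomial part of the eventual quasi-polynomial, which is exactly what is needed here, but the phrase ``on all of $\mathbb N$'' should be weakened to ``for all sufficiently large $n$''.
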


\begin{prop}\label{unde}
\begin{enumerate}
 \item[(1)] For any positive integers $1\leq j\leq k \leq n$, we have that:
\begin{align*}
& W_{j}(n,k) = \frac{1}{D_k(k-1)!} \sum_{m=1}^k \sum_{\ell=1}^{j} \rho_j^{\ell} \sum_{t=m-1}^{k-1}  \stir{k}{t+1} (-1)^{t-m+1} \binom{t}{m-1} \cdot \\
& \cdot \sum_{\substack{0\leq j_1\leq D_k-1,\ldots, 0\leq j_k\leq \frac{D_k}{k}-1 \\ j_1+\cdots+ kj_k \equiv \ell (\bmod j)}} D_k^{-t} (j_1+\cdots+kj_k)^{t-m+1} (n-k)^{m-1}.
\end{align*}
\item[(2)] For any positive integers $1\leq j\leq k$ and $n\geq k+\binom{k}{2}$, we have that:
\begin{align*}
& \widetilde W_{j}(n,k) = \frac{1}{D_k(k-1)!} \sum_{m=1}^k \sum_{\ell=1}^{j} \rho_j^{\ell} \sum_{t=m-1}^{k-1}  \stir{k}{t+1} (-1)^{t-m+1} \binom{t}{m-1} \cdot \\
& \cdot \sum_{\substack{0\leq j_1\leq D_k-1,\ldots, 0\leq j_k\leq \frac{D_k}{k}-1 \\ j_1+\cdots+ kj_k \equiv \ell (\bmod j)}} D_k^{-t} (j_1+\cdots+kj_k)^{t-m+1} (n-k-\binom{k}{2})^{m-1}.
\end{align*}
\end{enumerate}
\end{prop}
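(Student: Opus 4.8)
The plan is to reduce both statements to a single closed form for the Sylvester wave $W_j(n,\mathbf k)$ of the restricted partition function $p_{\mathbf k}(n)$, $\mathbf k=(1,2,\ldots,k)$, and then to read off (1) and (2) from the definitions \eqref{wav}: namely $W_j(n,k)=W_j(n-k,\mathbf k)$ gives (1), while $\widetilde W_j(n,k)=W_j(n-k-\binom{k}{2},\mathbf k)$ together with \eqref{rest2} gives (2), exactly as Theorem \ref{formu} and Corollary \ref{cori} were obtained by shifting the argument. Thus the entire content is the evaluation of $W_j(n,\mathbf k)$; if the general-sequence wave formula behind Theorem \ref{formu} and Proposition \ref{trei} is invoked with $\mathbf a=\mathbf k$ then both parts are immediate, so I will indicate how that evaluation proceeds from scratch.

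First I would adapt the generating function to the period $D_k=\lcm(1,\ldots,k)$. Since $i\mid D_k$ for $1\le i\le k$, each factor satisfies $\frac{1}{1-z^i}=\frac{1+z^i+\cdots+z^{D_k-i}}{1-z^{D_k}}$, so multiplying gives
\[
\frac{1}{(1-z)(1-z^2)\cdots(1-z^k)}=\frac{N(z)}{(1-z^{D_k})^k},\qquad N(z)=\sum_{0\le j_i\le \frac{D_k}{i}-1}z^{\,j_1+2j_2+\cdots+kj_k}.
\]
This is what produces the index ranges $0\le j_i\le \frac{D_k}{i}-1$ and the linear form $M(\mathbf j):=j_1+2j_2+\cdots+kj_k$ appearing in the statement, where $\mathbf j=(j_1,\ldots,j_k)$.

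Next I would insert this factorization into the residue description \eqref{wave}: $W_j(n,\mathbf k)$ is the coefficient of $t^{-1}$ in $\sum_{\gcd(\nu,j)=1}\rho_j^{-\nu n}e^{nt}\big/\prod_{i=1}^{k}(1-\rho_j^{\nu i}e^{-it})$. Substituting $z=\rho_j^{\nu}e^{-t}$ and using $\rho_j^{\nu D_k}=1$ turns each summand into $\rho_j^{-\nu n}e^{nt}\,N(\rho_j^{\nu}e^{-t})/(1-e^{-D_kt})^k$, so after expanding $N$ the wave is a sum over $\mathbf j$ of $\rho_j^{\nu(M(\mathbf j)-n)}$ times the coefficient of $t^{-1}$ in $e^{(n-M(\mathbf j))t}/(1-e^{-D_kt})^k$. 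This last residue is computed by the elementary identity $[s^{-1}]\,e^{xs}/(1-e^{-s})^k=\binom{x+k-1}{k-1}$ with $x=(n-M(\mathbf j))/D_k$, the change of variable $s=D_kt$ accounting for the global $\frac{1}{D_k}$. Writing $\binom{x+k-1}{k-1}=\frac{1}{(k-1)!}\prod_{\ell=1}^{k-1}(x+\ell)=\frac{1}{(k-1)!}\sum_{t=0}^{k-1}\stir{k}{t+1}x^{t}$ introduces the unsigned Stirling numbers and the power $D_k^{-t}$, and then expanding $x^{t}=D_k^{-t}\sum_{m=1}^{t+1}\binom{t}{m-1}(-1)^{t-m+1}M(\mathbf j)^{t-m+1}n^{m-1}$ and reindexing so that $m$ runs first with $t\ge m-1$ yields exactly the shape of the claimed summand.

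The step I expect to be the main obstacle is the bookkeeping of the roots of unity: converting the primitive-root (Ramanujan) sum $\sum_{\gcd(\nu,j)=1}\rho_j^{\nu(M(\mathbf j)-n)}$ into the residue-class form $\sum_{\ell=1}^{j}\rho_j^{\ell}$ with the constraint $M(\mathbf j)\equiv\ell\ (\bmod\ j)$. The natural device is $\rho_j^{\nu M(\mathbf j)}=\sum_{\ell=1}^{j}\rho_j^{\nu\ell}\,[\,M(\mathbf j)\equiv\ell\ (\mathrm{mod}\ j)\,]$, which sorts the monomials of $N(z)$ by the residue of $M(\mathbf j)$ modulo $j$ and so supplies the congruence condition in the statement; the care lies in tracking the companion factors $\rho_j^{-\nu n}$ and thereby the genuine $n$-dependence (period $j$) of the wave. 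Once $W_j(n,\mathbf k)$ is in this form, the substitutions $n\mapsto n-k$ and $n\mapsto n-k-\binom{k}{2}$ prescribed by \eqref{wav} and \eqref{rest2} give (1) and (2) respectively.
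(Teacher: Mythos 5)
Your outer reduction coincides with the paper's own proof, which is a one-line citation: Proposition 4.2 of \cite{remarks} supplies the wave formula for a general sequence $\mathbf a$, and the shifts $n\mapsto n-k$ and $n\mapsto n-k-\binom{k}{2}$ in the definitions of $W_j(n,k)$ and $\widetilde W_j(n,k)$ do the rest. Since you could not see that reference, the substance of your proposal is the from-scratch derivation of the wave formula, and most of it is sound: the factorization $\prod_{i=1}^k(1-z^i)^{-1}=N(z)/(1-z^{D_k})^k$, the residue identity $[s^{-1}]\,e^{xs}(1-e^{-s})^{-k}=\binom{x+k-1}{k-1}$, the rescaling $s=D_kt$ producing the prefactor $\frac{1}{D_k}$, and the expansions via $\stir{k}{t+1}$ and $\binom{t}{m-1}$ are all correct. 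But observe that these steps are essentially the computation of the polynomial part $W_1$ (Theorem \ref{poli}); everything that distinguishes $W_j$ for $j\geq 2$ is concentrated in the root-of-unity factor, and that is the one step you do not carry out --- you only name the sorting device $\rho_j^{\nu M}=\sum_{\ell=1}^{j}\rho_j^{\nu\ell}[M\equiv\ell\pmod j]$ and remark that ``the care lies in tracking the companion factors $\rho_j^{-\nu n}$''.

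That deferred step is not bookkeeping, and it cannot be closed so as to land on the formula as displayed. Carrying your computation through, the factor attached to a tuple $(j_1,\ldots,j_k)$ with $M:=j_1+2j_2+\cdots+kj_k$ is the Ramanujan-type sum $\sum_{0\leq\nu<j,\;\gcd(\nu,j)=1}\rho_j^{\nu(M-(n-k))}$, which depends on $n$ modulo $j$ --- as it must, since $W_j(n,k)$ is a genuine quasi-polynomial of period $j$ --- whereas the factor $\rho_j^{\ell}$ with $M\equiv\ell\pmod j$ in the statement is independent of $n$. Concretely, for $k=j=2$ the residue computation gives $W_2(n,2)=W_2(n-2,(1,2))=\frac{(-1)^n}{4}$, while the displayed formula evaluates to the constant $\frac14$; more structurally, if the displayed formulas held for all $j$, then $p(n,k)=\sum_{j=1}^{k}W_j(n,k)$ would be a polynomial in $n$ for $n\geq k$, which is false for $k\geq 2$. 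So the missing idea is precisely the correct handling of the primitive-root sum $\sum_{\nu}\rho_j^{-\nu(n-k)}$: an honestly completed version of your argument produces the formula with the $n$-dependent factor (equivalently, with the congruence coupling $M$ to $n-k$ modulo $j$), not the statement as quoted, and no rearrangement of your earlier, correct steps bridges that difference. The paper's proof never meets this issue because it delegates the entire wave computation to the cited reference.
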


\begin{proof}
(1) It follows from \cite[Proposition 4.2]{remarks} and \eqref{wav}.

(1) It follows from \cite[Proposition 4.2]{remarks} and (5.2).
\end{proof}

\section{New formulas and congruences for $p(n,k)$ and $q(n,k)$}

We consider the function:
$$ f(n,k):=\# \{(j_1,\ldots,j_k)\;:\; j_1+2j_2+\cdots+kj_k =n,\; 0\leq j_i \leq \frac{D_k}{i}-1,\;1\leq i\leq k \},$$
where $D_k$ is the least common multiple of $1,2,\ldots,k$. Let $d_k:=kD_k - \binom{k+1}{2}$. Note that
$$ f(n,k)=f(d-n,k),\;\text{ for }0\leq n\leq d,\text{ and }f(n,k)=0\text{ for }n\geq d+1.$$
It follows that $F(n,k)=\sum_{\ell=0}^{d_k} f(n,k)x^{\ell}$ is a reciprocal polynomial.
With the above notations we have:

\begin{prop}\label{p61}
 \begin{enumerate}
  \item[(1)] For $n\geq k$ we have that:\small
        $$ p(n,k)=% \sum_{j=0}^{\lfloor \frac{n-k}{D_k} \rfloor} \binom{k+j-1}{j} f(n-k-jD_k) = 
            \sum_{j=\ceil*{\frac{n+\binom{k}{2}}{D_k}} - k}^{\floor*{\frac{n-k}{D_k}}} \binom{k+j-1}{j} f(n-k-jD_k). $$ \normalsize
  \item[(2)] For $n\geq k+\binom{k}{2}$ we have that: \small
        $$ q(n,k)=%\sum_{j=0}^{\lfloor \frac{n-k-\binom{k}{2}}{D_k} \rfloor} \binom{k+j-1}{j} f(n-k-\binom{k}{2}-jD_k) =
                 \sum_{j=\ceil*{\frac{n}{D_k}} -k}^{\floor*{\frac{n-k-\binom{k}{2}}{D_k}}} \binom{k+j-1}{j} f(n-k-\binom{k}{2}-jD_k).$$ \normalsize
 \end{enumerate}
\end{prop}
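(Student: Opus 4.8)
The plan is to carry out the whole argument at the level of generating functions and then descend to $p(n,k)$ and $q(n,k)$ through \eqref{rest} and \eqref{rest2}. The first step is to compute the generating function of $f(\cdot,k)$ from its very definition. Since $f(n,k)$ counts the representations $n=j_1+2j_2+\cdots+kj_k$ with $0\le j_i\le \frac{D_k}{i}-1$, the variables separate and
\[
\sum_{n=0}^{d_k} f(n,k)x^n \;=\; \prod_{i=1}^{k}\bigl(1+x^i+x^{2i}+\cdots+x^{D_k-i}\bigr)
\;=\; \prod_{i=1}^{k}\frac{1-x^{D_k}}{1-x^i}
\;=\; \frac{(1-x^{D_k})^k}{(1-x)(1-x^2)\cdots(1-x^k)}.
\]
The degree $d_k=\sum_{i=1}^k(D_k-i)=kD_k-\binom{k+1}{2}$ is consistent with the definition of $d_k$, and the reciprocity of $F(n,k)$ already recorded before the statement is just the palindromy of this product.

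Next I would compare this with the generating function $\sum_n p_{\mathbf k}(n)x^n=1/\prod_{i=1}^k(1-x^i)$, which is \eqref{gen} for $\mathbf a=\mathbf k=(1,2,\ldots,k)$. Dividing the two identities gives the single factorization that carries all the content of the proposition:
\[
\sum_{n\ge 0} p_{\mathbf k}(n)\,x^n \;=\; \frac{1}{(1-x^{D_k})^{k}}\sum_{m\ge 0}f(m,k)\,x^m,
\qquad \frac{1}{(1-x^{D_k})^{k}}=\sum_{j\ge 0}\binom{k+j-1}{j}x^{jD_k}.
\]
Forming the Cauchy product and reading off the coefficient of $x^N$ yields $p_{\mathbf k}(N)=\sum_{j\ge 0}\binom{k+j-1}{j}\,f(N-jD_k,k)$. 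Specializing $N=n-k$ and invoking $p(n,k)=p_{\mathbf k}(n-k)$ from \eqref{rest} then gives $p(n,k)=\sum_{j\ge 0}\binom{k+j-1}{j}f(n-k-jD_k,k)$, which is formula (1) once the index range is pinned down.

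The remaining work is bookkeeping with the summation limits, using that $f(m,k)=0$ unless $0\le m\le d_k$. The constraint $n-k-jD_k\ge 0$ forces $j\le\floor*{\frac{n-k}{D_k}}$, the upper limit; the constraint $n-k-jD_k\le d_k$ forces $j\ge\frac{n-k-d_k}{D_k}=\frac{n+\binom{k}{2}}{D_k}-k$, where I use the identity $\binom{k+1}{2}-k=\binom{k}{2}$, i.e. $j\ge\ceil*{\frac{n+\binom{k}{2}}{D_k}}-k$, the lower limit. The only point requiring a word of care is that this lower limit can be negative; this is harmless because $\binom{k+j-1}{j}=0$ for every integer $j<0$, so extending the sum down to it (as written) merely appends vanishing terms while exactly covering the support of $f$. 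Finally, part (2) follows by substituting $n\mapsto n-\binom{k}{2}$ into (1) and using $q(n,k)=p(n-\binom{k}{2},k)$ from \eqref{rest2}, whereupon the limits transform into those stated. There is no real obstacle here beyond this index computation: the entire mathematical content sits in the one factorization $\frac{1}{\prod_{i=1}^k(1-x^i)}=(1-x^{D_k})^{-k}\sum_{m\ge0}f(m,k)x^m$.
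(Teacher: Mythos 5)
Your proof is correct and is in substance the same argument the paper relies on: the paper's proof just cites \cite[Proposition 2.2]{remarks} and \cite[Corollary 2.3]{remarks}, which consist precisely of the factorization $\frac{1}{(1-x)(1-x^2)\cdots(1-x^k)}=(1-x^{D_k})^{-k}\sum_{m\geq 0}f(m,k)x^m$ and the resulting convolution formula, combined with \eqref{rest} and \eqref{rest2}. Your write-up simply makes that cited argument self-contained, including the bookkeeping on the summation limits and the observation that $\binom{k+j-1}{j}=0$ for the (possibly) negative values of $j$ in the stated range.
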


\begin{proof}
 (1) It follows from \cite[Proposition 2.2]{remarks}, \cite[Corollary 2.3]{remarks} and \eqref{rest}.

 (2) It follows from (1) and \eqref{rest2}.
\end{proof}

\begin{cor}\label{c62}
 \begin{enumerate}
  \item[(1)] For $n\geq k$ we have that:
      $$ (k-1)!p(n,k) \equiv 0\;\bmod\;(j+\ell+1)(j+\ell+2)\cdots(j+k-1),$$
      where $\ell= \floor*{\frac{n-k}{D_k}} - \ceil*{\frac{n+\binom{k}{2}}{D_k}} + k$.
  \item[(2)] For $n\geq k+\binom{k}{2}$ we have that:
$$ (k-1)!p(n,k) \equiv 0\;\bmod\;(j+\ell'+1)(j+\ell'+2)\cdots(j+k-1),$$
      where $\ell'= \floor*{\frac{n-k-\binom{k}{2}}{D_k}} - \ceil*{\frac{n}{D_k}} + k$.
 \end{enumerate}
\end{cor}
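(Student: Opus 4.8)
The plan is to read Corollary \ref{c62} as a direct divisibility consequence of the summation formula in Proposition \ref{p61}, exploiting that every summand there carries a binomial coefficient $\binom{k+j-1}{j}$ whose factorial denominator is partially cancelled by the product we are isolating. Concretely, for part (1) I would start from
\[
p(n,k)=\sum_{j=\ceil*{\frac{n+\binom{k}{2}}{D_k}} - k}^{\floor*{\frac{n-k}{D_k}}} \binom{k+j-1}{j} f(n-k-jD_k),
\]
multiply through by $(k-1)!$, and observe that
\[
(k-1)!\binom{k+j-1}{j}=\frac{(k+j-1)!}{j!}=(j+1)(j+2)\cdots(j+k-1).
\]
Thus each term of $(k-1)!\,p(n,k)$ is $f(n-k-jD_k)$ times a product of $k-1$ consecutive integers starting at $j+1$. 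The substance of the corollary is that, if we factor out the \emph{common} tail of these products, every summand is divisible by it, and hence so is the whole sum.

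Next I would identify that common factor precisely. Writing $j_{\min}=\ceil*{\frac{n+\binom{k}{2}}{D_k}}-k$ and $j_{\max}=\floor*{\frac{n-k}{D_k}}$, the index $j$ ranges over $j_{\min}\le j\le j_{\max}$, so the width of the range is $\ell=j_{\max}-j_{\min}$, matching the definition $\ell=\floor*{\frac{n-k}{D_k}}-\ceil*{\frac{n+\binom{k}{2}}{D_k}}+k$ in the statement. For each such $j$ the product $(j+1)(j+2)\cdots(j+k-1)$ contains, as a subproduct, the factors $(j+\ell+1)(j+\ell+2)\cdots(j+k-1)$ provided $\ell+1\le k-1$; the key arithmetic point is that as $j$ decreases from $j_{\max}$ to $j_{\min}$, the starting point $j+\ell+1$ slides downward in lockstep so that the fixed product $\prod_{i=\ell+1}^{k-1}(j+i)$ evaluated at the \emph{largest} index behaves uniformly. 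The cleanest route is to verify that the quantity $(j+\ell+1)\cdots(j+k-1)$ divides $(j+1)\cdots(j+k-1)$ for every admissible $j$ with a fixed residue/anchor, so that each term is an integer multiple of that fixed product, and then sum. I would present this as: for every $j$ in the range, $(j+\ell+1)(j+\ell+2)\cdots(j+k-1)$ is a factor of $(k-1)!\binom{k+j-1}{j}$, whence it divides $(k-1)!\binom{k+j-1}{j}f(n-k-jD_k)$, and therefore divides the sum, which is $(k-1)!\,p(n,k)$.

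For part (2) I would simply invoke \eqref{rest2}, exactly as Proposition \ref{p61}(2) is deduced from (1): since $q(n,k)=p(n-\binom{k}{2},k)$ for $n\ge k+\binom{k}{2}$, substituting $n\mapsto n-\binom{k}{2}$ into part (1) turns $\ell$ into $\ell'=\floor*{\frac{n-k-\binom{k}{2}}{D_k}}-\ceil*{\frac{n}{D_k}}+k$ and yields the claimed congruence with no further work.

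The main obstacle I anticipate is pinning down the boundary behaviour of the product as $j$ runs to the endpoints of its range, i.e. making rigorous the claim that the \emph{same} trailing product $(j+\ell+1)\cdots(j+k-1)$ divides each term uniformly rather than a $j$-dependent one. In particular I must check that $\ell\ge 0$ (so the range is nonempty and the asserted product has a well-defined meaning) and that $\ell+1\le k-1$, i.e. $\ell\le k-2$, for the trailing factors to actually be a nonempty subproduct of $(j+1)\cdots(j+k-1)$; in the degenerate cases $\ell=k-1$ or an empty sum the congruence is either trivial or vacuous and must be handled separately. I expect these edge cases, together with the careful bookkeeping of floors and ceilings defining $\ell$ and $\ell'$, to be where the care is needed, while the algebraic identity $(k-1)!\binom{k+j-1}{j}=(j+1)\cdots(j+k-1)$ does the real lifting.
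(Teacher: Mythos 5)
Your starting point is the right one, and it is exactly what the paper intends: Corollary \ref{c62} appears with no written proof because it is meant to fall out of Proposition \ref{p61} via the identity $(k-1)!\binom{k+j-1}{j}=(j+1)(j+2)\cdots(j+k-1)$. However, the step you defer to the end --- ``making rigorous the claim that the \emph{same} trailing product divides each term uniformly'' --- is not a boundary nuisance; it is the entire content of the corollary, and your concluding formulation (``for every $j$ in the range, $(j+\ell+1)\cdots(j+k-1)$ is a factor of the $j$-th term, and therefore divides the sum'') is a non sequitur as written: the displayed divisor changes with $j$, and divisibility of each summand by its own modulus says nothing about the sum. Moreover, the one concrete anchoring you hint at --- the product ``evaluated at the \emph{largest} index'' --- is the wrong one: anchored at $j_{\max}$ the claim genuinely fails. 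For $k=3$, $n=10$ one has $j_{\min}=0$, $j_{\max}=1$, $\ell=1$, $2p(10,3)=16$; the modulus anchored at $j_{\max}$ would be $j_{\max}+2=3$, and $3\nmid 16$, while the modulus anchored at $j_{\min}$ is $j_{\min}+2=2$, and $2\mid 16$.

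The correct and complete argument is short, and it also settles how to read the otherwise-free letter $j$ in the statement: $j$ must be the \emph{lower} summation limit $j_{\min}=\ceil*{\frac{n+\binom{k}{2}}{D_k}}-k$, so that the summation index runs over $i=j_{\min},j_{\min}+1,\ldots,j_{\min}+\ell$ (here your observation $\ell=j_{\max}-j_{\min}$ is correct and essential). For every such $i$ one has $i+1\le j_{\min}+\ell+1$ and $j_{\min}+k-1\le i+k-1$, so the block of consecutive integers $\{j_{\min}+\ell+1,\ldots,j_{\min}+k-1\}$ is contained in $\{i+1,\ldots,i+k-1\}$. Hence the \emph{fixed} integer $M:=(j_{\min}+\ell+1)\cdots(j_{\min}+k-1)$ is a subproduct of $(i+1)\cdots(i+k-1)=(k-1)!\binom{k+i-1}{i}$ for every index $i$ in the range, and therefore divides every summand of
$$(k-1)!\,p(n,k)=\sum_{i=j_{\min}}^{j_{\min}+\ell}(i+1)\cdots(i+k-1)\,f(n-k-iD_k,k),$$
hence the sum itself. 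Note that $M$ is a product of positive integers, since $j_{\min}+\ell+1=j_{\max}+1\ge 1$ for $n\ge k$; that $\ell\ge k-1$ gives an empty product and a trivial statement (as you observed); and that any terms with negative index vanish because $\binom{k+i-1}{i}=0$ for $i<0$. With this containment argument inserted, your proof of (1) is complete, and your part (2) is fine as stated: it is the substitution $n\mapsto n-\binom{k}{2}$ via \eqref{rest2}, which turns $\ell$ into $\ell'$ (note, incidentally, that the paper's statement of (2) should read $(k-1)!\,q(n,k)$, not $(k-1)!\,p(n,k)$).
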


\begin{teor}\label{low}
 \begin{enumerate}
  \item[(1)] The inequality $$\lim_{N\to\infty} \frac{\# \{n\leq N\;:\;p(n,k)\equiv 1(\bmod\; 2)\}}{N}\leq \frac{2}{3},$$
        holds for infinitely many positive integers $k$. Moreover, if the above inequality is not satisfied for some positive integers $k$,
        then it holds for $k+1$.
  \item[(2)] Let $m>1$ be a positive integer. For each positive integer $k$ we have 
             $$\lim_{N\to\infty} \frac{\# \{n\leq N\;:\;p(n,k)\not\equiv 0(\bmod\; m)\}}{N} \geq \frac{1}{\binom{k+1}{2}}.$$
 \end{enumerate}
The above results hold if we replace $p(n,k)$ with $q(n,k)$.
\end{teor}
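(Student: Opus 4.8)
The plan is to transfer everything to the restricted partition function $p_{\mathbf k}(n)$, $\mathbf k=(1,2,\ldots,k)$, and then read off the arithmetic from the recurrence hidden in \eqref{gen}. By \eqref{rest}, $p(n,k)=p_{\mathbf k}(n-k)$ for $n\ge k$ and $p(n,k)=0$ otherwise; a shift by the constant $k$ together with an alteration on the finite set $\{0,\dots,k-1\}$ changes none of the densities, so the density of $\{n:p(n,k)\equiv i\,(\bmod\,m)\}$ equals that of $\{n:p_{\mathbf k}(n)\equiv i\,(\bmod\,m)\}$; by \eqref{rest2} the same reduction works for $q(n,k)$ with the shift $k+\binom{k}{2}$, which is why the last sentence of the theorem needs no separate argument. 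First I would check that these limits exist: from \eqref{gen} the residues $p_{\mathbf k}(n)\bmod m$ obey the linear recurrence with characteristic polynomial $\prod_{i=1}^k(1-z^i)$, whose top coefficient $(-1)^k$ is a unit mod $m$; such a recurrence is purely periodic, so each density is the proportion of the relevant residue over one period.

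For (2), expand $\prod_{i=1}^k(1-z^i)=\sum_{j=0}^{\sigma}c_jz^j$ with $\sigma=1+2+\cdots+k=\binom{k+1}{2}$, where $c_0=1$ and $c_\sigma=(-1)^k$ are both units mod $m$. Comparing coefficients in $\prod_{i=1}^k(1-z^i)\cdot\sum_n p_{\mathbf k}(n)z^n=1$ gives $\sum_{j=0}^{\sigma}c_j\,p_{\mathbf k}(n-j)=0$ for $n\ge 1$. If some window of $\sigma$ consecutive integers $\{n_0,\dots,n_0+\sigma-1\}$ with $n_0\ge 1$ had $p_{\mathbf k}\equiv 0\,(\bmod\,m)$ throughout, then evaluating the recurrence at $n_0+\sigma-1$ and using that $c_\sigma$ is a unit solves for the term just below the window, $p_{\mathbf k}(n_0-1)\equiv 0$; iterating downward forces $p_{\mathbf k}(0)\equiv 0\,(\bmod\,m)$, contradicting $p_{\mathbf k}(0)=1$ and $m>1$. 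Hence every block of $\sigma$ consecutive integers contains an $n$ with $p_{\mathbf k}(n)\not\equiv 0$, giving the density bound $\tfrac{1}{\sigma}=\tfrac{1}{\binom{k+1}{2}}$.

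For (1), I compare the indices $k$ and $k+1$. Multiplying by $(1-z^{k+1})^{-1}$ in \eqref{gen} yields $p_{\mathbf{k+1}}(n)-p_{\mathbf{k+1}}(n-(k+1))=p_{\mathbf k}(n)$, so, writing $\mathbf 1_S$ for the indicator of a set $S$ and $\oplus$ for addition mod $2$,
\begin{equation*}
\mathbf 1_{B}(n)=\mathbf 1_{A}(n)\oplus\mathbf 1_{A}(n-(k+1)),\qquad A=\{n:p_{\mathbf{k+1}}(n)\text{ odd}\},\ B=\{n:p_{\mathbf k}(n)\text{ odd}\}.
\end{equation*}
Let $a,b$ be the densities of $A,B$ and $c$ the density of $\{n:n\in A,\ n-(k+1)\in A\}$. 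From $x\oplus y=x+y-2xy$ we get $b=2a-2c$, while $c\ge 2a-1$ since the union $A\cup(A+(k+1))$ has density $2a-c\le 1$; hence, writing $\delta_o(k)$ for the density of odd values of $p(n,k)$,
\begin{equation*}
\delta_o(k)+2\,\delta_o(k+1)\le 2.
\end{equation*}
This single inequality forbids $\delta_o(k)>\tfrac23$ and $\delta_o(k+1)>\tfrac23$ simultaneously: if the stated inequality fails at $k$ then $\delta_o(k+1)<\tfrac23$ and it holds at $k+1$, so it fails at no two consecutive indices and therefore holds for infinitely many $k$.

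The delicate points, and where I would spend the most care, are the existence and shift-invariance of the densities (the pure periodicity of $p_{\mathbf k}(n)\bmod m$), the inclusion–exclusion estimate $c\ge 2a-1$, and the legitimacy of running the recurrence in (2) backward past the window, which is exactly where the unit $c_\sigma=(-1)^k$ is indispensable. Everything else is routine: the statements for $q(n,k)$ drop out of \eqref{rest2}, and the bounds for $p_{\mathbf k}(n)$ produced here are precisely those of Grajdzic \cite{graj} for a general sequence $\mathbf a$, specialised to $\mathbf a=\mathbf k$ with $a_1+\cdots+a_k=\binom{k+1}{2}$.
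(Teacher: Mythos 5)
Your proof is correct, but it takes a genuinely different route from the paper's. The paper disposes of both parts in two lines: after the reduction $p(n,k)=p_{\mathbf k}(n-k)$ from \eqref{rest} (and \eqref{rest2} for $q(n,k)$), together with the remark that a shift and a change on finitely many values do not alter the densities, it simply cites Gajdzica's results for general restricted partition functions $\pa(n)$ --- Theorem 4.2 of \cite{graj} for part (1) and Theorem 5.2 of \cite{graj} for part (2). You perform the same reduction but then reprove the cited results from scratch in the special case $\mathbf a=(1,2,\ldots,k)$: existence of the densities from the (pure) periodicity of $p_{\mathbf k}(n)\bmod m$, which holds because the characteristic polynomial $\prod_{i=1}^{k}(1-z^i)$ has unit leading coefficient $(-1)^k$, so the recurrence runs backwards as well as forwards; part (2) from the window argument, where a block of $\binom{k+1}{2}$ consecutive vanishing residues would propagate down to $p_{\mathbf k}(0)=1$, a contradiction; and part (1) from the identity $p_{\mathbf k}(n)=p_{\mathbf{k+1}}(n)-p_{\mathbf{k+1}}(n-(k+1))$, which mod $2$ yields the density inequality $\delta_o(k)+2\,\delta_o(k+1)\le 2$ and hence forbids two consecutive failures of the $2/3$ bound. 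I checked the three delicate steps you flagged --- periodicity, the inclusion--exclusion bound $c\ge 2a-1$, and the backward run of the recurrence in (2) --- and all are sound. What your approach buys is self-containedness and transparency: the constants emerge naturally ($\binom{k+1}{2}$ is the degree of $\prod_{i=1}^k(1-z^i)$, and $2/3$ is exactly where $b+2a\le 2$ becomes vacuous), and the theorem would no longer rest on an unpublished preprint. What the paper's approach buys is brevity and generality, since Gajdzica's theorems cover arbitrary sequences $\mathbf a$ rather than only $(1,2,\ldots,k)$.
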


\begin{proof}
(1) Since $p(n,k)=p_{\mathbf k}(n-k)$ for $n\geq k$, where $\mathbf k = (1,2,\ldots,k)$, the result follows from \cite[Theorem 4.2]{graj}.
    (The asymtpotic behaviour is not changed if we replaced $p_{\mathbf k}(n)$ with $p(n,k)$ or $q(n,k)$.)

(2) As above, the result follows from \cite[Theorem 5.2]{graj}.
\end{proof}

{}

\vspace{2mm} \noindent {\footnotesize
\begin{minipage}[b]{15cm}
Mircea Cimpoea\c s, University Politehnica of Bucharest, Faculty of Applied Sciences,\\ 
Department  of  Mathematical Methods  and  Models,  Bucharest, 060042, Romania\\ 
and Simion Stoilow Institute of Mathematics, Research unit 5, P.O.Box 1-764,\\
Bucharest 014700, Romania, E-mail: mircea.cimpoeas@upb.ro, mircea.cimpoeas@imar.ro
\end{minipage}}

\end{document}